\pgfplotsset{compat=newest}
\numberwithin{equation}{section}
\theoremstyle{definition}
\newtheorem{thm}{Theorem}[section]
\newtheorem{defn}[thm]{Definition}
\newtheorem{lem}[thm]{Lemma}
\newtheorem{prop}[thm]{Proposition}
\newtheorem{cor}[thm]{Corollary}
\newtheorem*{rmk}{Remark}%不带编号
\newcommand{\R}{\mathbb{R}}  %实数
\newcommand{\N}{\mathbb{N}}  %自然数
\newcommand{\p}{\partial}  %偏导数
\newcommand{\dif}{\textup{d}} %微分
\newcommand{\Hau}{\mathcal{H}} %Hausdorff测度
\newcommand{\diam}{\textup{diam}} %直径
\newcommand{\dist}{\textup{dist}}
\newcommand{\sing}{\textup{sing }}%奇点
\newcommand{\reg}{\textup{reg }}%正则点
\newcommand{\spt}{\textup{spt }}%support
\begin{document}
%\title{\textbf{QUANTITATIVE GENERIC REGULARITY OF AREA MINIMIZING CURRENTS}} 
\title[Minkowski content estimates]{Minkowski content estimates for generic area minimizing hypersurfaces}
%Powered by 章俊彦 2013级数学科学学院 yx3x@mail.ustc.edu.cn
\author{Xuanyu Li}
\address{Department of Mathematics, Cornell University, Ithaca, NY 14853, USA}
\email{xl896@cornell.edu}
\maketitle

\begin{abstract}
Let $\Gamma$ be a smooth, closed, oriented, $(n-1)$-dimensional submanifold of $\R^{n+1}$. It was shown by Chodosh-Mantoulidis-Schulze \cite{CMSb} that one can perturb $\Gamma$ to a nearby $\Gamma'$ such that all minimizing currents with boundary $\Gamma'$ are smooth away from a set with Hausdorff dimension less than $n-9$. We prove that the perturbation can be made such that the singular set of the minimizing current with boundary $\Gamma'$ has Minkowski dimension less than $n-9$.
\end{abstract}

\section{Introduction}
Let $\Gamma^{n-1}\subset\R^{n+1}$ be a smooth, closed, oriented submanifold. It is well known that there exists an integral current $T$ with compact support which has least area among integral currents with boundary $\Gamma$; (see \cite{Feda,Sim}). Such currents are called minimizing currents. They are known to be smooth away from a set of codimension 7; see \cite{Fedb}. 
%by Federer and general case by Hardt-Simon (\cite{Fedb,HSa}). 
However, there exist examples by Bombieri-De Giorgi-Giusti \cite{BDGG} which show that such minimizers can fail to be smooth.\par

The fine structure of singular sets of minimizing currents have been extensively investigated. The blow up of a minimizing current at singular points is no longer a tangent plane, but a tangent cone. The subspace of a tangent cone on which it is translation invariant is called its spine. The singular set of a minimizing current can henceforth be stratified into subsets $\mathcal{S}^l$ on which the tangent cones have no more than $l$-dimensional spines. A fundamental result of Almgren shows that the Hausdorff dimension of each $\mathcal{S}^l$ is no more than $l$ \cite{Alm}; see also \cite{Whi}. It was further shown by Cheeger-Naber \cite{CN} and Naber-Valtorta \cite{NV} that the same dimensional restriction holds for Minkowski dimensions and $\mathcal{S}^l$ is $l$-countable rectifiable.\par

On the other hand, it was shown that the singular sets of minimizing currents can be perturbed away. This was first proved by Hardt-Simon that after perturbing the boundary slightly in $\R^8$, the resulting minimizing currents are smooth \cite{HSb}; see also the work of Smale \cite{Sma} on Riemannian manifolds. Recently, it has been shown by Chodosh-Mantoulidis-Schulze \cite{CMSa} that the same result holds for minimizing currents in ambient dimensions 8,9,10. Later, the same authors also proved that the perturbation can decrease Hausdorff dimension of singular stratum by a dimensional constant greater than 2 \cite{CMSb}. The authors call such a result \emph{generic regularity} for the Plateau problem.
We also refer readers to \cite{CLS,LWa,LWb} for generic regularity for min-max minimal hypersurfaces. \par

In this paper, we prove an improvement of the results in \cite{CMSb}. That is, the above generic regularity not only holds for Hausdorff dimensions, but also for Minkowski dimensions. Specifically, we have
\begin{thm}\label{1}
    Let $\Gamma^{n-1}\subset\R^{n+1}$ be a smooth, closed, oriented submanifold. There exist arbitrary small perturbations $\Gamma'$ of $\Gamma$ (as $C^{\infty}$ graphs in the normal bundle of $\Gamma$) such that every minimizing integral $n$-current with boundary $[\![\Gamma']\!]$ is of the form $[\![M']\!]$ for a smooth, precompact, oriented hypersurface $M'$ with $\p M'=\Gamma'$, and 
    \[ \sing M'=\emptyset, \textup{ if } n+1\leqslant10, \textup{ else }\dim_{\textup{Min}}\sing M'\leqslant n-9-\epsilon_n.\]
    In fact the singular strata $\mathcal{S}^l(M')$ of $M'$ satisfy 
    \[ \mathcal{S}^0(M')=\mathcal{S}^1(M')=\mathcal{S}^2(M')=\emptyset,\quad \dim_{\textup{Min}}\mathcal{S}^l_{\epsilon}(M')\leqslant l-2-\epsilon_n\textup{ for }l\geqslant3\textup{ and }\epsilon>0. \]
    Here $\dim_{\textup{Min}}$ denotes the Minkowski dimension, and $\mathcal{S}^l_{\epsilon}(M')$ denotes the quantitative singular stratum; see Section 2 for detailed definitions. Note that for $\epsilon$ small, $\mathcal{S}^{n-7}_{\epsilon}(M')=\sing M'$ by the quantitative $\epsilon$-regularity theorem \cite[Theorem 6.2]{CN}.
\end{thm}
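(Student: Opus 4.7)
The plan is to upgrade the Hausdorff-dimensional generic regularity of \cite{CMSb} to a Minkowski-dimensional estimate on the quantitative strata, by running their perturbation scheme through the quantitative stratification of Cheeger-Naber \cite{CN} and the covering technology of Naber-Valtorta \cite{NV}. The low-dimensional case $n+1 \leqslant 10$ is already provided by \cite{CMSa}, so the task reduces to establishing the quantitative Minkowski bound $\dim_{\textup{Min}} \mathcal{S}^l_\epsilon(M') \leqslant l - 2 - \epsilon_n$ for $l \geqslant 3$ and every $\epsilon > 0$, for a suitable generic perturbation $\Gamma'$ of $\Gamma$.

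The first technical step is to localize and rescale. Fix $l \geqslant 3$ and $\epsilon > 0$, and cover $\mathcal{S}^l_\epsilon(M')$ by a maximal collection of disjoint balls $\{B_r(x_i)\}$ of a small radius $r$; the quantitative stratification of \cite{CN} already yields $\#\{x_i\} \leqslant C r^{-l}$, and the goal is the improved packing bound $\#\{x_i\} \leqslant C r^{-(l-2-\epsilon_n)}$. Into each ball $B_r(x_i)$ I would insert a scale-invariant version of the CMS perturbation: after rescaling $B_r(x_i)$ to unit size, the quantitative $\epsilon$-regularity of \cite{CN} forces $M'$ to be close to a tangent cone with $l$-dimensional spine, and a quantitative analogue of the CMS avoidance argument eliminates singular points outside a subset of controlled content. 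Summing the resulting packing estimate over dyadic scales gives the desired Minkowski bound, and a diagonal choice of perturbation sizes $\epsilon_j \to 0$ produces a single $\Gamma'$ working simultaneously for all $\epsilon > 0$ and all $l \geqslant 3$.

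The main obstacle is the quantitative form of the CMS avoidance step. Their proof constructs, for each singular tangent cone, a Jacobi-field-driven one-parameter family of perturbed minimizers, and argues that generically the singular set is swept into a locus of codimension $2 + \epsilon_n$. To make this work uniformly at every scale, I would need an effective non-degeneracy bound for the Jacobi-field perturbation map, valid across the entire moduli space of minimizing cones with $l$-dimensional spine. This is essentially a quantitative Lojasiewicz-type transversality statement on the space of singular cones, and obtaining it is the principal new analytic input. I expect this to be accessible by combining the compactness of minimizing tangent cones with the frequency-type decay estimates that underlie the Naber-Valtorta neck decomposition, but the precise form needed, uniform over the relevant cone moduli, is where the argument is most delicate.
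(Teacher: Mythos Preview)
Your proposal has a genuine structural gap. You frame the improvement as a packing estimate on a \emph{single} minimizer $M'$, to be obtained by ``inserting a scale-invariant version of the CMS perturbation'' into each ball $B_r(x_i)$. But in the Plateau problem the minimizer is globally determined by the boundary datum $\Gamma'$; there is no way to perturb independently inside each ball, and no ``diagonal choice of perturbation sizes'' can turn local avoidance arguments into a single global $\Gamma'$. The CMS mechanism is not a local perturbation of $M'$ near each singular point --- it is a one-parameter family $(\Gamma_s)_s$ of boundaries, together with the timestamp function $\mathfrak t$ on the union $\mathscr F=\bigcup_s \mathscr M(\Gamma_s)$, which is $\alpha$-H\"older on $\sing\mathscr F$ for every $\alpha<\kappa_n+1$.

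The paper's argument works at the level of this family. The two ingredients are: (i) a Minkowski content bound on the \emph{union} $\mathcal S^l_{\epsilon}(\mathscr F)$ (Theorem~\ref{2}), obtained not from Naber--Valtorta neck decomposition but from a quantitative rigidity lemma (two nearly-conical, non-crossing minimizers must have their centers nearly aligned along a common $\leqslant l$-dimensional spine), fed into a Cheeger--Naber style good/bad scale covering; and (ii) a Minkowski-content slicing lemma for super-H\"older maps (Proposition~\ref{3}), which from $\mathcal M^{*l+\epsilon'}(\mathcal S^l_\epsilon(\mathscr F))<\infty$ and the H\"older exponent $\alpha$ of $\mathfrak t$ yields $\dim_{\textup{Min}}\big(\mathcal S^l_\epsilon(\mathscr F)\cap\mathfrak t^{-1}(s)\big)\leqslant l+\epsilon'-\alpha$ for $\Hau^1$-a.e.\ $s$. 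Taking $\alpha\nearrow\kappa_n+1=2+\epsilon_n$ and $\epsilon'\searrow0$ gives the stated bound for a.e.\ $s$; a countable intersection over $\epsilon=1/j$ handles all $\epsilon>0$ simultaneously. No ``quantitative Lojasiewicz-type transversality on cone moduli'' is needed --- the analytic input you identify as the main obstacle is not part of the argument, and the step you treat as routine (the family-level Minkowski bound and its slicing) is where the actual work lies.
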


There, the dimensional constants $\epsilon_n$ are given explicitly by 
\[\kappa_n=\frac{n-2}{2}-\sqrt{\frac{(n-2)^2}{4}-(n-1)},\quad \epsilon_n=\kappa_n-1\in(0,1].\]
These constants were obtained by Z. Wang \cite{Wan} in their study of Jacobi fields on area minimizing hypercones, see also \cite{Simb}. 

In \cite{CMSb}, the result follows from the combination of two independent results about a family of minimizers. The first one is a Hausdorff dimension estimate on a union of singular strata of minimizers. The second one is a super Holder continuity of a timestamp function coming from perturbing the boundary of an area minimizer. Since the second one is irrelevant to which measure to use, we only need to improve the first one to a Minkowski content version:
\begin{thm}\label{2}
    Let $\mathscr{F}$ be a family of minimizing $n$-currents in $B_R(0)\subset\R^{n+1}$ whose supports are pairwise disjoint. Assume there exist $r,\Lambda>0$, such that for each $T\in\mathscr{F}$ and $x\in\sing T$
    \begin{enumerate}[(a)]
        \item $T\llcorner B_r(x)$ is a minimizing boundary;
        \item $\Theta_T(x,1)<\Lambda$,
    \end{enumerate}    
    Then given $l\in\lbrace0,1,\dots,n-7\rbrace,\epsilon,\epsilon'\in (0,1)$, we have 
    \[ \Hau^{n+1}\big(U_r(\mathcal{S}^l_{\epsilon,r}(\mathscr{F}))\big)\leqslant C(n,l,\epsilon,\epsilon',r,R,\Lambda)r^{n+1-l-\epsilon'},\]
    where $U_r(A)$ stands for the $r$-neighborhood of a set $A\subset\R^{n+1}$. In particular, we get 
    \[\mathcal{M}^{*l+\epsilon'}(\mathcal{S}^l_{\epsilon}(\mathscr{F}))<\infty,\]
    There $\Theta_T(x,1)$ is the density ratio of $T$ at $x$ and $\mathcal{M}^{*s}$ denotes $s$-dimensional upper Minkowski content; see Section 2 for definitions.
\end{thm}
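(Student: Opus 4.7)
The plan is to show that $\mathcal{S}^l_{\epsilon, r}(\mathscr{F}) \cap B_1$ can be covered by at most $C(n,l,\epsilon,\epsilon',\Lambda)\, r^{-l-\epsilon'}$ balls of radius $r$; the volume bound on the $r$-neighborhood and the finite upper Minkowski content then follow by standard estimates. The strategy is to adapt the quantitative stratification covering of Naber-Valtorta to the family setting, using the pairwise disjointness of supports of currents in $\mathscr{F}$ to aggregate contributions.

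For a single minimizing current $T$ satisfying assumptions (a) and (b), Naber-Valtorta's theorem gives the sharp covering $N_r(\mathcal{S}^l_{\epsilon, r}(T) \cap B_1) \leqslant C r^{-l}$, via an inductive Reifenberg-type construction at dyadic scales in which an $L^2$-best approximation estimate bounds the $\beta_2$-numbers of $T$ in terms of its density drops. I would run an analogous inductive covering at dyadic scales $s = 2^{-k}$ down to $s = r$, but with a common stopping criterion across the family. At each scale $s$, balls $B_s(x)$ centered on $\mathcal{S}^l_{\epsilon, r}(\mathscr{F}) \cap B_1$ are classified as \emph{non-flat} if the combined family energy
\[ E_s(x) := \sum_{T \in \mathscr{F}} \big(\Theta_T(x, 2s) - \Theta_T(x, s)\big) \]
exceeds a small constant $\eta = \eta(n, \epsilon, \Lambda)$, and \emph{flat} otherwise. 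The non-flat balls' count telescopes: the pairwise disjointness of supports together with (a) and (b) give a uniform bound on the family mass in every ball, which bounds the sum of $E_s$ over disjoint balls and across scales. The flat balls would be refined further using a family version of the $L^2$-best approximation together with a common approximating $l$-plane, in the style of the $\beta_2$-tree of Naber-Valtorta.

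The central technical obstacle is establishing the family $L^2$-best approximation: one must show that when $E_s(x)$ is small, there exists a common $l$-plane $L$ such that every $T \in \mathscr{F}$ with singular points in $B_s(x)$ satisfies $\beta_{2,T}^2(B_s(x), L) \ll 1$. This requires quantitative rigidity of area-minimizing cones with spines of dimension at most $l$, combined with the geometric constraint from disjoint supports, which prevents singular points of different currents from accumulating arbitrarily once a common cone structure is approximated. The expected source of the $\epsilon'$ loss is the need to discretize the family parameter at each scale: different currents may require slightly different approximating planes at small scales, and controlling the aggregate cover forces us to select an effective subfamily of size $O(r^{-\epsilon'})$ at each scale. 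Summing the single-current Naber-Valtorta bounds $r^{-l}$ over this discretization yields the desired $r^{-l-\epsilon'}$ covering count, and hence the claimed Minkowski content estimate.
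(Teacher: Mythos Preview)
Your proposal has a genuine gap at the point you yourself flag as the ``central technical obstacle,'' and the surrounding scaffolding does not match what actually makes the argument go through.

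First, the combined family energy $E_s(x)=\sum_{T\in\mathscr{F}}(\Theta_T(x,2s)-\Theta_T(x,s))$ is not the right object. Since the supports in $\mathscr{F}$ are pairwise disjoint, a point $x\in\mathcal{S}^l_{\epsilon,r}(\mathscr{F})$ lies in $\spt T$ for exactly one $T$, and for the other $T'$ the quantity $\Theta_{T'}(x,\cdot)$ carries no useful monotonicity. There is no telescoping bound on $\sum_s E_s(x)$ coming from disjointness. What the paper actually uses is the \emph{single}-current density drop $\Theta_T(x,\gamma^{j-1})-\Theta_T(x,\gamma^j)$ for the unique $T$ containing $x$, exactly as in Cheeger--Naber; assumption (b) then bounds the number of ``bad'' scales by $K=[\Lambda/\eta]$ for each point individually.

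Second, the mechanism producing a \emph{common} $l$-plane is not an $L^2$-best-approximation/$\beta_2$ estimate at all. The paper's key new input (Lemma \ref{L:cluster1}, via Proposition \ref{density drop}) is a rigidity statement for two minimizing boundaries that do not cross smoothly: if $T$ has small density drop at $0\in\mathcal{S}^l_{\epsilon,\gamma}(T)$ and $T'$ has small density drop at a nearby $x'$, then $x'$ must lie in $U_\gamma(\Pi)$ for the spine $\Pi$ of the (approximate) tangent cone of $T$. This is proved by a compactness/contradiction argument reducing to the fact that two non-crossing minimizing cones with different centers must coincide. Your proposal never invokes this density-drop rigidity; without it there is no reason the singular points of different currents should cluster near a single $l$-plane, and the family $L^2$-best-approximation you posit cannot be established.

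Third, your diagnosis of the $\epsilon'$ loss is incorrect. It does not come from ``discretizing the family parameter'' or selecting $O(r^{-\epsilon'})$ subfamilies per scale. In the paper the loss arises exactly as in the single-current Cheeger--Naber argument: the decomposition into at most $j^K$ bad-scale patterns $E_{A^j}$ and the constant $c_2$ in the good-scale count $(c_2\gamma^{-l})$ are absorbed by choosing $\gamma=c_2^{-2/\epsilon'}$, so that $j^K c_2^j\leqslant\gamma^{-\epsilon' j}$ for large $j$. The family aspect contributes nothing extra to the exponent; it enters only through the common-plane lemma at good scales.

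In short: replace the Naber--Valtorta/$\beta_2$ machinery by the simpler Cheeger--Naber good/bad scale decomposition, and supply the missing rigidity lemma (two non-crossing approximate cones force a common spine) to handle the good scales across the whole family simultaneously.
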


In order to prove Minkowski content estimates like this, a typical way is to construct some covering of balls with same radius; see e.g. \cite{CN,NV}. The key tool (Lemma \ref{L:cluster1}) used to construct this covering is a quantitative version of an infinitesimal density drop result obtained in \cite[Proposition 3.3]{CMSa}; see Proposition \ref{density drop}. Our observation is that, Proposition \ref{density drop}, when applied to two minimizing cones with different centers, implies that the two minimizing cones must be identical.

This observation, together with a straightforward contradiction argument, implies that, if a minimizing current $T$ is sufficiently cone-like near a point, any nearby point around which another current $T'$ (that does not cross $T$ smoothly) that is also cone-line have to lie in a smaller neighborhood of a low dimensional subspace.
Therefore, we can refine the covering of singular strata using a uniformly bounded number of balls of same radius. %with uniformly bounded amount. 
On the other hand, when we are looking into smaller scales of a current near a singular point, those scales at which the current are not cone-like only appear finite times.
Using these, we are able to construct the desired covering iteratively.

In \cite{CMSb}, combining the two facts mentioned above together with a key measure theoretic lemma regarding estimates of the Hausdorff dimension of the image of a super Holder function by Figalli-Ros Oton-Serra \cite[Proposition 7.7]{FROS} gives the desired Hausdorff dimension estimates of generic area minimizing currents. For our purpose, we prove a Minkowski content version of the key lemma as follows.
In the following, a modulus of continuity refers to an increasing function $\omega:\R_+\rightarrow\R_+$ with $\lim_{r\rightarrow0}\omega(r)=0$.
\begin{prop}\label{3}
    Let $A\subset\R^n$ and $f:A\rightarrow[-1,1]$. Assume that for some $l\in(0,n]$ and $\alpha>0$ we have
    \begin{enumerate}[(a)]
        \item $\mathcal{M}^{*l}(A)<\infty$;
        \item There exists a modulus of continuity $\omega$ such that $$\vert f(x)-f(x_0)\vert\leqslant\omega(\vert x-x_0\vert)\vert x-x_0\vert^{\alpha}\textup{ for all }x,x_0\in A.$$
    \end{enumerate}
    Then:
    \begin{enumerate}[(1)]
        \item If $l\leqslant\alpha$, we have that $\mathcal{M}^{l/\alpha}(f(A))=0$;
        \item If $l>\alpha$, we have that for $\Hau^1$ a.e. $t\in[-1,1]$, we have $$\mathcal{M}^{l-\alpha+\delta}(A\cap f^{-1}(t))=0,\textup{ for all }\delta>0.$$
    \end{enumerate}
\end{prop}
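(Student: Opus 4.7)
My approach reduces both parts to a single explicit covering. By hypothesis (a), for every sufficiently small $r>0$ there exist $x_{1},\ldots,x_{N_{r}}\in A$ with $A\subset\bigcup_{i}B_{r}(x_{i})$ and $N_{r}\leq C r^{-l}$, where $C$ depends on $\mathcal{M}^{*l}(A)$. Hypothesis (b) then forces $f(A\cap B_{r}(x_{i}))$ to lie in an interval $I_{i}(r)\subset[-1,1]$ of length at most $2\omega(r)r^{\alpha}$. Writing $N(E,\rho)$ for the minimal number of intervals of length $2\rho$ needed to cover a set $E\subset\R$, both parts will follow from these two facts, modulo care about the $\limsup$ in the definition of $\mathcal{M}^{*s}$.

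For part (1), the intervals $\{I_{i}(r)\}_{i=1}^{N_{r}}$ cover $f(A)$, so $N(f(A),\omega(r)r^{\alpha})\leq N_{r}\leq Cr^{-l}$. Multiplying by $(2\omega(r)r^{\alpha})^{l/\alpha}$ yields $C\cdot 2^{l/\alpha}\,\omega(r)^{l/\alpha}$, which tends to $0$. To upgrade this to the continuous statement $\limsup_{\rho\to0}N(f(A),\rho)(2\rho)^{l/\alpha}=0$, I would take $r_{k}=2^{-k}$ and, for each small $\rho>0$, choose the largest $k$ with $\omega(r_{k+1})r_{k+1}^{\alpha}\leq\rho$; then monotonicity in $\rho$ of the covering number gives $N(f(A),\rho)\leq N_{r_{k+1}}\leq Cr_{k+1}^{-l}$, and combined with $\rho<\omega(r_{k})r_{k}^{\alpha}$ and $r_{k}/r_{k+1}=2$ the product $N(f(A),\rho)(2\rho)^{l/\alpha}$ is bounded by a constant multiple of $\omega(r_{k})^{l/\alpha}\to 0$. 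This yields $\mathcal{M}^{l/\alpha}(f(A))=0$.

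For part (2), the key step is to integrate in $t$. A point $t\in[-1,1]$ lies in $f(A\cap B_{r}(x_{i}))$ only if $t\in I_{i}(r)$, hence $N(A\cap f^{-1}(t),r)\leq\#\{i:\,t\in I_{i}(r)\}$, and Tonelli gives
\[
\int_{-1}^{1}N(A\cap f^{-1}(t),r)\,dt\;\leq\;\sum_{i=1}^{N_{r}}|I_{i}(r)|\;\leq\;2C\,\omega(r)\,r^{\alpha-l}.
\]
Multiplying by $r^{l-\alpha+\delta}$ and summing over $r_{k}=2^{-k}$ produces $\sum_{k}\omega(2^{-k})\,2^{-k\delta}<\infty$ (as $\omega$ is bounded near $0$). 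A second Tonelli interchange then yields $\sum_{k}N(A\cap f^{-1}(t),r_{k})\,r_{k}^{l-\alpha+\delta}<\infty$ for $\Hau^{1}$-a.e.\ $t$, so each summand tends to $0$. The same dyadic interpolation as in part (1), now applied in $r$ directly, lifts this to $\limsup_{r\to0}N(A\cap f^{-1}(t),r)\,r^{l-\alpha+\delta}=0$, i.e., $\mathcal{M}^{l-\alpha+\delta}(A\cap f^{-1}(t))=0$. Running the argument along a sequence $\delta_{m}\downarrow 0$ and intersecting the countably many $\Hau^{1}$-null exceptional sets handles all $\delta>0$ simultaneously.

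The only substantive new point relative to the Hausdorff version in \cite{FROS} is the passage from control along a single dyadic sequence $r_{k}\to0$ to the genuine $\limsup$ appearing in upper Minkowski content; Borel--Cantelli/Tonelli on its own only controls $N$ along $r_{k}$. The dyadic interpolation $N(E,r)\leq N(E,r')$ for $r\geq r'$ closes this gap at the cost of only a harmless multiplicative factor $2^{s}$ in $N(E,r)r^{s}$, and this is the one step that needs to be set up carefully; the rest is Tonelli applied to the covering supplied by (a) and (b).
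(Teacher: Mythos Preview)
Your proposal is correct, and the overall strategy---Vitali/packing count $N_r\le Cr^{-l}$, image intervals of length $\le 2\omega(r)r^\alpha$, then Tonelli---is the same as the paper's.  Part (1) is essentially identical (the paper bounds $\mathcal{H}^1(U_{\epsilon r^\alpha}(f(A)))$ directly rather than via covering numbers, but this is cosmetic); note that your ``largest $k$ with $\omega(r_{k+1})r_{k+1}^\alpha\le\rho$'' should read ``smallest $k$'', since the condition holds for all large $k$.  Part (2) is where you genuinely diverge.  The paper runs a Chebyshev argument on a \emph{two}-parameter family: it fixes thresholds $\epsilon_j=2^{-2j}$ for $\omega$, chooses $r_j=r(\epsilon_j)/2$, defines bad sets $T^\delta_{j,k}=\{t:N_{\epsilon_j,2^{1-k}r_j}(t)\ge C\sqrt{\epsilon_j}(2^{1-k}r_j)^{\alpha-l-\delta}\}$, and shows $\bigcap_m\bigcup_{j\ge m}\bigcup_k T^\delta_{j,k}$ is $\mathcal{H}^1$-null.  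Your route is shorter: you sum $\int_{-1}^1 N(A\cap f^{-1}(t),2^{-k})\,2^{-k(l-\alpha+\delta)}\,dt\le 2C\,\omega(2^{-k})2^{-k\delta}$ over $k$, observe the right side is summable merely because $\omega$ is bounded near $0$, and apply Tonelli once.  In particular you never use $\omega(r)\to 0$ in part (2), whereas the paper's $\sqrt{\epsilon_j}$ splitting does; both approaches then need the same dyadic interpolation $N(E,r)\le N(E,r')$ for $r\ge r'$ to pass from control along $r_k=2^{-k}$ to the genuine $\limsup$, and both intersect over $\delta_m\downarrow 0$ at the end.  Your argument is cleaner for the proposition as stated; the paper's double-index scaffolding is set up with an eye toward the sharper $\delta=0$ statements recorded in the remark following the proof (and your method recovers those too, since e.g.\ $\omega(r)\le C(\log r)^{-2}$ gives $\sum_k\omega(2^{-k})<\infty$ and hence summability at $\delta=0$).
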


\subsection{Outline of the paper}
This paper is organized as follows. In Section \ref{S:preliminaries}, we introduce basic definitions of measures and integral currents used in this paper. We also introduce the refined structure of singular sets of minimizing currents. In Section \ref{SS:Proof of Thm2}, we prove Theorem \ref{2}, the Minkowski content estimate of singular strata of a family of minimizing currents. In Section \ref{SS:proof of Prop3}, we prove Proposition \ref{3}, the Minkowski content estimates of the image of a super Holder function. Finally, we prove our main theorem in Section \ref{SS: proof of Thm1}. We include some previous results used in this paper in the Appendix.

\subsection*{Acknowledgement}
The author would like to thank %highly appreciates (I think the word "appreciate" mainly applies to something)
Professor Xin Zhou for suggesting this problem %introducing this topic 
and many enlightening discussions. 
The author also wants to thank Otis Chodosh for sharing a note related to \cite{CMSa} during his early learning stage.

%%%%%%%%%%%%%%%%%%%%%%%%%%%%%%%%%%%%%%%%%%%%%%%%%%%%%%%%%%%%%%%%%%
\section{Preliminaries}
\label{S:preliminaries}
In this section we introduce basic definitions used in this paper.

\subsection{Hausdorff measures and Minkowski contents}
We first briefly review the notion of Hausdorff measures and Minkowski contents. Denote $$\omega_s=\frac{\pi^{s/2}}{\Gamma(s/2+1)},$$ 
where $\Gamma(t)=\int_0^{\infty}x^{t-1}e^{-x}\dif x$ is the Gamma function. When $s$ is an integer, $\omega_s$ is precisely the volume of $s$-dimensional unit ball.
\begin{defn}
    Let $s>0$ and $A\subset\R^n$ be an arbitrary subset.
    \begin{enumerate}[(1)]
        \item For $\delta>0$, define $$\Hau^s_{\delta}(A)=\inf\left\lbrace \sum_{j=1}^{\infty}\omega_s (\diam C_j/2)^s:S\subset\cup_{i=j}^{\infty}C_j\textup{ with }\diam C_j\leqslant\delta\right\rbrace.$$
        \item The $s$-dimensional \emph{Hausdorff measure} is defined as $$\Hau^s(A)=\lim_{\delta\rightarrow0}\Hau^s_{\delta}(A).$$
    \end{enumerate}
\end{defn}

For basic properties of Hausdorff measure, one may refer to \cite{Sim}. In particular, $\Hau^n$ agrees with $n$-dimensional Lebesgue measure of $\R^n$. Hence it stands for the volume of a subset in $\R^n$. Let us turn to Minkowski content. For $A\subset\R^n$, let $$U_r(A)=\lbrace x\in\R^n:\dist(x,A)<r\rbrace$$ be the $r$-neighborhood of $A$.
\begin{defn}
    Let $s>0$ and $A\subset\R^n$ be an arbitrary subset. 
    \begin{enumerate}
        \item The $s$-dimensional \emph{upper} and \emph{lower Minkowski content} of $A$ are respectively defined as $$\mathcal{M}^{*s}(A)=\limsup_{r\rightarrow0}\frac{U_r(A)}{\omega_n r^{n-s}}, \textup{\, and\, }\mathcal{M}^s_*(A)=\liminf_{r\rightarrow0}\frac{U_r(A)}{\omega_n r^{n-s}}.$$
        \item If $$\mathcal{M}^{*s}(A)=\mathcal{M}_*^s(A),$$
        we say $A$ is s-dimensional \emph{Minkowski measurable} and denote their same value by $\mathcal{M}^s(A)$.
    \end{enumerate}
\end{defn}
Unfortunately, it turns out that neither $\mathcal{M}^{*s}$ nor $\mathcal{M}_*^s$ is a measure except for $s=0$. In fact, we have the following fact $$\mathcal{M}^{*s}(\Bar{A})=\mathcal{M}^{*s}(A),\quad \mathcal{M}_*^s(\Bar{A})=\mathcal{M}^s_*(A),$$
where $\Bar{A}$ is the closure of $A$. Hence though a single point is $s$-dimensional Minkowski zero content for $s>0$, but any countable dense subset of $\R^n$ has infinite $n$-dimensional Minkowski content.\par 
\begin{defn}
    Let $A\subset\R^n$, its \emph{upper} and \emph{lower Minkowski dimension} are respectively defined as $$\overline{\dim}_{\textup{Min}}(A)=\inf\lbrace s:\mathcal{M}^{*s}(A)=0\rbrace,\quad \underline{\dim}_{\textup{Min}}(A)=\sup\lbrace s:\mathcal{M}_*^s(A)=\infty\rbrace.$$
\end{defn}
In this paper, we only focus on upper Minkowski dimension and denote it by $\dim_{\textup{Min}}$. Note that the upper Minkowski content has the nice description:$$\mathcal{M}^{*s}(A)<\infty\iff \exists C>0, \Hau^n(A)\leqslant Cr^{n-s}\textup{ for }r \textup{ sufficiently small}.$$
\subsection{Minimizing currents}
Let $U$ be an open set in $\R^{n+1}$. We denote by $\mathbb{I}_k(U)$ the space of integral $k$-currents in $U$. This a generalization of $k$-dimensional submanifolds in $U$ first studied by Federer and Fleming (\cite{FF}). It turns out that $\mathbb{I}_n(U)$ is a good space for studying variational problems such as Plateau problem because it has good compactness property. We refer the reader to standard references \cite{Feda} and \cite{Sim}. For $T\in \mathbb{I}_k(U)$, we denote by $\Vert T\Vert$ the associated Radon measure. The weak convergence of integral currents is denoted by $\rightharpoonup$.\par
\begin{defn}
    If the density function of $T\in\mathbb{I}_k(U)$ equals to 1 $\Hau^k$ a.e. on $\spt T$, we say $T$ is \emph{multiplicity one}. The space of multiplicity one $k$-currents is denoted by $\mathbb{I}_k^1(U)\subset\mathbb{I}_k(U)$.
\end{defn}
We only consider the codimension one case $k=n$. 
\begin{defn}
    We say $C\in\mathbb{I}_n^1(\R^{n+1})$ is a \emph{cone} centered at $x$ if $$\eta_{x,\lambda\#}C=\eta_{x,1\#}C\textup{ for all }\lambda>0.$$
\end{defn}
We will be interested in \emph{mass minimizing currents} $T$ in $\mathbb{I}_n(U)$, or \emph{minimizing currents} for brevity, meaning that $$\Vert T\Vert(W)\leqslant\Vert T+X\Vert(W)\textup{ for all }W\Subset U\textup{ and }\spt X\subset W,\p X=0.$$
We denote by $$\Theta_T(x,r)=\frac{\Vert T\Vert(B_r(x))}{\omega_n r^n}$$ the \emph{density ratio} of $T$. By the well-known monotonicity formula, $\Theta_T(x,r)$ is increasing for $B_r(x)\cap \spt\p T=\emptyset$. Hence the limit $\Theta_T(x)=\lim_{r\rightarrow0}\Theta(x,r)$ exists anywhere on $\spt T\setminus\spt\p T$, call the \emph{density} of $T$ at $x$. Moreover, $\Theta_T(x)$ is upper semicontinuous on both arguments $x$ and $T$. Another frequently used fact is that for any sequence $r_i\rightarrow0$ and $x\in\spt T\setminus\spt\p T$, we can pass to a subsequence to derive $$C=\lim_{i\rightarrow\infty}\eta_{x,r_i\#}T\textup{ exists and is a cone centered at 0 with }\Theta_C(0)=\Theta_T(x).$$
Note also that the study of multiplicity one minimizing currents can be reduced to the study of \emph{minimizing boundaries}, see \cite[\S 37]{Sim}.

\subsection{Singular sets and its stratification}
We first define the regular part and singular part of a minimizing current.
\begin{defn}
    Let $U\subset\R^{n+1}$ be an open set and $T\in\mathbb{I}_n(U)$ be minimizing. Denote 
    \begin{equation}
        \begin{aligned}
            \reg T=\lbrace x\in U\cap\spt T\setminus\spt\p T: \textup{ for some }m\in\N, r>0\textup{ and }n\textup{-submanifold }\Sigma,&\\\textup{we have }T\llcorner B_r(x)=m[\![\Sigma]\!]\rbrace,&
        \end{aligned}\nonumber
    \end{equation}
    $$\text{and}\quad \sing T=U\cap\spt T\setminus(\reg T\cup\spt\p T).$$
\end{defn}
The central objects studied in this paper are refined subsets of $\sing T$ which rely on the decomposition of tangent cones of $T$.
\begin{defn}
    Let $C\in\mathbb{I}_n^1(\R^{n+1})$ be a minimizing cone. We call its top density part \emph{spine}:$$\textup{spine }C=\lbrace x\in\spt C:\Theta_C(x)=\Theta_C(0)\rbrace.$$
\end{defn}
One can show that $\textup{spine }C$ is a subspace of $\R^{n+1}$ on which $C$ is translation invariant. Moreover, by \cite{JSim}, $\dim\textup{spine }C\leqslant n-7$.\par 
\begin{defn}
    Let $T$ be a minimizing boundary in $U$.
    \begin{enumerate}[(1)]
        \item The $l$-th \emph{singular stratum} of $T$ is defined by $$\mathcal{S}^l(T)=\lbrace x\in\spt T:\dim\textup{spine }C\leqslant l \textup{ for all tangent cones }C\textup{ of }T\textup{ at }x\rbrace.$$
        \item For $\epsilon,r>0$, we further define the \emph{quantitative singular stratum} by$$\mathcal{S}^l_{\epsilon,r}(T)=\lbrace x\in\spt T:\mathcal{F}_{B_1(0)}(\eta_{x,s\#}T,\mathscr{C}^{l+1})>\epsilon\textup{ for all }s\in[r,1] \textup{ and }B_1(0)\subset\eta_{x,s}(U)\rbrace,$$ $$\text{and}\quad \mathcal{S}^l_{\epsilon}(T)=\bigcap_{r>0}\mathcal{S}^l_{\epsilon,r}(T),$$
        where $\mathscr{C}^{l+1}$ is the space of minimizing cones in $\R^{n+1}$ with $\dim\textup{spine }C\geqslant l+1$ for each $C\in\mathscr{C}^{l+1}$ and $\mathcal{F}_{B_1(0)}$ is the flat metric on $B_1(0)$.
    \end{enumerate}
\end{defn}
Note that we have the following monotonicity of $\mathcal{S}_{\epsilon,r}^l(T)$:$$\mathcal{S}^l_{\epsilon',r'}(T)\subset\mathcal{S}^l_{\epsilon,r}(T),\textup{ if }\epsilon'>\epsilon,r'<r$$
We have the stratification of $\sing T$ via $\mathcal{S}^l(T)$ as follows:$$\mathcal{S}^0(T)\subset\mathcal{S}^1(T)\subset\cdots\subset\mathcal{S}^{n-7}(T)=\sing T.$$
By classical results (\cite{Feda}), $\dim_{\textup{Hau}}\mathcal{S}^l(T)\leqslant l$. The notion of quantitative singular stratum is taken from \cite{CN,NV}. There authors proved that the same dimension bound holds for Minkowski dimensions on $\mathcal{S}_{\epsilon}^l(T)$.\par
In this paper, what we really concern is a family $\mathscr{F}$ of minimizing currents. In this case, we define$$\sing\mathscr{F}=\bigcup_{T\in\mathscr{F}}\sing T,\quad \mathcal{S}^l(\mathscr{F})=\bigcup_{T\in\mathscr{F}}\mathcal{S}^l(T),\quad \textup{ and }\mathcal{S}^l_{\epsilon,r}(\mathscr{F})=\bigcup_{T\in\mathscr{F}}\mathcal{S}^l_{\epsilon,r}(T).$$
When comparing two minimizing currents, we need the following concept taken from \cite{CMSa}.
\begin{defn}
    Suppose $T,T'\in\mathbb{I}_n(U)$. We say that $T,T'$ \emph{cross smoothly} at $x\in\reg T\cap\reg T'$, if for all sufficiently small $r>0$, there are points of $\reg T'$ on both sides of $\reg T$ within $B_r(x)$ and vice versa.
\end{defn}

\section{Proof of Results}
\subsection{Proof of Theorem \ref{2}}
\label{SS:Proof of Thm2}
We first need the following observation due to Proposition \ref{density drop}.
\begin{lem}\label{rigidity}
    Suppose $C, C'\in \mathbb{I}_n^1(\R^{n+1})$ are two non-flat minimizing cones centered at $0,x$ respectively. Then$$C=\pm C',x\in\textup{spine }C\textup{ if and only if } C\textup{ does not cross }C'\textup{ smoothly}.$$
\end{lem}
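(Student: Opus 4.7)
For the ``if'' direction, assume $C=\pm C'$ and $x\in\textup{spine }C$. The structural property that a minimizing cone is translation-invariant (as a current) along its spine forces $C$ to be invariant under translation by $x$. Combined with scale-invariance at the origin, $C$ is then also invariant under dilations about $x$, since such a dilation may be written as the conjugation of a dilation about $0$ by translation. Therefore $\spt C = \spt C'$, and the orientations agree up to a global sign by $C'=\pm C$. At every regular point of both currents, the two agree locally as oriented smooth hypersurfaces (up to a global sign), which precludes smooth crossing.

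For the ``only if'' direction, assume $C$ and $C'$ do not cross smoothly. The heart of the lemma is the observation highlighted in the introduction: Proposition \ref{density drop}, when specialized to two non-flat minimizing cones that do not cross smoothly, forces the cones to coincide up to sign. I plan to apply the proposition at a touching point $y\in\spt C\cap\spt C'$ and exploit the homogeneity of each cone at its own center, together with the fact that a cone equals its own tangent cone at its center, in order to turn a local density-drop inequality into the global identification $C=\pm C'$.

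Once $C=\pm C'$ is established, the remaining assertion $x\in\textup{spine }C$ follows from a brief cone-geometric computation. The equality says $C$ is simultaneously a cone at $0$ and a cone at $x$. Invariance under dilations about both centers forces $\spt C$, and then $C$ itself, to be translation-invariant in the direction $x$: dilation about $x$ by factor $\lambda>0$ composed with dilation about $0$ by factor $1/\lambda$ equals translation by $(1-1/\lambda)x$, which must preserve $C$; picking $\lambda\neq 1$ yields a nonzero translation invariance along the line through $x$. Since the spine is the maximal translation-invariance subspace of $C$, we conclude $x\in\textup{spine }C$.

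The main obstacle will be deploying Proposition \ref{density drop} correctly for two cones whose centers differ, extracting enough information from a single density comparison to pin down both cones globally. One needs to locate an appropriate touching point, verify the hypotheses of the proposition there, and leverage the self-similarity of each cone at its own center to promote local density information into the global conclusion $C=\pm C'$; this is precisely the reduction the author foreshadows in the introduction, so if Proposition \ref{density drop} is as quantitative as claimed, this step should be short.
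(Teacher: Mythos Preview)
Your sketch has the right tool but misses the one-line mechanism that makes the lemma an ``observation due to Proposition~\ref{density drop}.'' The point is not to hunt for a touching point $y\in\spt C\cap\spt C'$ (whose existence you have not justified, and whose density $\Theta_{C'}(y)$ need not equal $\Theta_{C'}(x)$), but to apply Proposition~\ref{density drop} \emph{twice}, symmetrically, and evaluate each inequality at the \emph{center} of the other cone. With $T=C'$ and cone $C$ one gets $\Theta_{C'}(x)\leqslant\Theta_C(0)$; after translating so that $C'$ is centered at the origin and taking $T=C$, one gets $\Theta_C(0)\leqslant\Theta_{C'}(x)$. Hence equality holds, and the rigidity clause of Proposition~\ref{density drop} immediately yields both $C'=\pm C$ and $x\in\textup{spine }C$. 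Your plan, as written, never produces the reverse inequality and therefore cannot trigger the equality case.

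Two smaller remarks. First, your labels are swapped: assuming $C=\pm C'$ and $x\in\textup{spine }C$ is the ``only if'' direction, not the ``if'' direction. Second, once you use the equality case of Proposition~\ref{density drop}, your separate cone-geometric derivation of $x\in\textup{spine }C$ (via composing dilations about $0$ and $x$) is correct but superfluous; the proposition already hands you that conclusion together with $C'=\pm C$. Likewise, the easy direction needs no work: $C=\pm C'$ gives $\spt C=\spt C'$ outright, so there is nothing to cross.
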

We extend it into a quantitative version.

\begin{lem}\label{L:cluster1}
    Given $\epsilon,\gamma\in(0,1)$, there exists $\eta=\eta(n,l,\gamma,\epsilon)\in(0,1)$ with the following property.\\
    Suppose $T$ is a minimizing boundary in $B_{\eta^{-1}}(0)$ with $0\in\mathcal{S}^l_{\epsilon,\gamma}(T)$ and $$\Theta_T(0,1)-\Theta(0,\gamma)<\eta.$$Then there exists a $\leqslant l$-dimensional subspace $\Pi$ of $\R^{n+1}$ such that:\par
    If $T'$ is another minimizing boundary in $B_{\eta^{-1}}(0)$ that does not cross $T$ smoothly. If $x\in B_1(0)\cap\sing T'$ satisfies $$\Theta_{T'}(x,1)-\Theta_{T'}(x,\gamma)<\eta,$$then $$x\in U_{\gamma}(\Pi).$$
\end{lem}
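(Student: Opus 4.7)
The plan is to argue by contradiction, using compactness of minimizing currents to pass to limiting cones and then invoking the rigidity statement of Lemma \ref{rigidity}. Suppose the conclusion fails for some fixed $\epsilon,\gamma\in(0,1)$. Then there exist $\eta_i\to0$ and minimizing boundaries $T_i$ in $B_{\eta_i^{-1}}(0)$ with $0\in\mathcal{S}^l_{\epsilon,\gamma}(T_i)$ and $\Theta_{T_i}(0,1)-\Theta_{T_i}(0,\gamma)<\eta_i$, such that no subspace $\Pi\subset\R^{n+1}$ of dimension $\leqslant l$ is a valid witness for $T_i$ in the sense of the lemma.

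The first step is to pass to a subsequence so that $T_i\rightharpoonup C$ for some minimizing integral current $C$ in $\R^{n+1}$ (the automatic mass bound for minimizing boundaries provides the needed compactness). Convergence of density ratios at a.e.\ scale combined with the vanishing density drop forces $\Theta_C(0,1)=\Theta_C(0,\gamma)$, so the monotonicity formula makes $C$ a minimizing cone centered at $0$, non-flat and with $\dim\textup{spine } C\leqslant l$ by the limiting bound $\mathcal{F}_{B_1(0)}(C,\mathscr{C}^{l+1})\geqslant\epsilon$ (note that a hyperplane lies in $\mathscr{C}^{l+1}$ whenever $l\leqslant n-7$). Set $\Pi:=\textup{spine } C$. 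Next I would apply the failure hypothesis with this particular $\Pi$: for each $i$, pick a minimizing boundary $T'_i$ in $B_{\eta_i^{-1}}(0)$ that does not cross $T_i$ smoothly, together with $x_i\in B_1(0)\cap\sing T'_i$ satisfying $\Theta_{T'_i}(x_i,1)-\Theta_{T'_i}(x_i,\gamma)<\eta_i$ but $\dist(x_i,\Pi)\geqslant\gamma$. A further subsequence produces $x_i\to y$ with $\dist(y,\Pi)\geqslant\gamma$ and $T'_i\rightharpoonup C'$. Running the density-drop argument at the moving centers $x_i$ then makes $C'$ a cone centered at $y$, and the universal density gap above $1$ at singular points (combined with upper semicontinuity of density) gives that $C'$ is non-flat.

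The crucial final step is to inherit the non-smooth-crossing property in the limit: if $C$ and $C'$ were to cross smoothly at some $z$, then Allard regularity plus $C^\infty$ convergence of $T_i,T'_i$ to $C,C'$ on the regular parts in a neighborhood of $z$ would force $T_i$ and $T'_i$ to cross smoothly near $z$ for all large $i$, contradicting our choice. Hence $C$ and $C'$ do not cross smoothly, and Lemma \ref{rigidity} gives $C=\pm C'$ and $y\in\textup{spine } C=\Pi$, contradicting $\dist(y,\Pi)\geqslant\gamma>0$. I expect the main obstacle to be precisely this transfer-of-non-crossing step: one must carefully unpack the definition of smooth crossing and verify that the local graphical convergence of $T_i$ and $T'_i$ near an alleged limit crossing point is strong enough to realize a genuine smooth crossing of $T_i$ and $T'_i$ themselves; the remaining pieces are standard compactness and monotonicity-formula bookkeeping.
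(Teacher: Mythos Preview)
Your proposal is correct and follows essentially the same route as the paper's proof: contradiction, compactness to extract limiting minimizing cones $C$ and $C'$, identification of $\Pi=\textup{spine }C$, transfer of the non-crossing condition via smooth convergence on the regular parts, and then Lemma~\ref{rigidity} to force $y\in\Pi$. The only point the paper makes slightly more explicit is that the equality $\Theta_C(0,1)=\Theta_C(0,\gamma)$ a priori gives conicality only on the annulus, and one invokes unique continuation of minimal hypersurfaces to conclude $C$ is a cone in all of $\R^{n+1}$; you should record that step.
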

\begin{proof}
    We prove by contradiction. Suppose the lemma does not hold, then there exists a series of minimizing boundaries $T_j$ in $B_j(0)$ with $0\in\mathcal{S}_{\epsilon,\gamma}^l(T_j),x_j\in B_1(0)\cap\sing T'$ and 
    $$\Theta_{T_j}(0,1)-\Theta_{T_j}(0,\gamma)<1/j.$$
    After passing to a subsequence, we can assume $T_j\rightharpoonup T$ a minimizing boundary in $\R^{n+1}$. By \cite[Theorem 34.5]{Sim}, the associated measure $\Vert T_j\Vert$ converges to $\Vert T\Vert$ in the weak sense of Radon measure. Hence
    $$\Vert T\Vert(B_1(0))\leqslant\liminf_{j\rightarrow\infty}\Vert T_j\Vert(B_1(0))\textup{ and }\Vert T\Vert(B_{\gamma'}(0))\geqslant\Vert T\Vert(\Bar{B}_{\gamma}(0))\geqslant\limsup_{j\rightarrow\infty}\Vert T_j\Vert(\Bar{B}_{\gamma}(0))$$
    for some $\gamma'$ slightly larger than $\gamma$. As a result, we have 
    $$0\leqslant\Theta_T(0,1)-\Theta_T(0,\gamma')\leqslant\liminf_{j\rightarrow\infty}(\Theta_{T_j}(0,1)-\Theta_{T_j}(0,\gamma))=0.$$
    Hence $T$ is a minimizing cone in $B_1(0)\setminus B_{\gamma'}(0)$. By the unique continuation of minimal hypersurfaces, $T$ itself is a minimizing cone $C$ in $\R^{n+1}$. Moreover, since $0\in\mathcal{S}_{\epsilon,\gamma}^l(T_j)$ for each $j$, we know that 
    $$\mathcal{F}_{B_1(0)}(T_j,\mathscr{C}^{l+1})>\epsilon\implies\mathcal{F}_{B_1(0)}(C,\mathscr{C}^{l+1})>\epsilon.$$
    Hence if we fix $\Pi=\textup{spine } C$, then $\dim\Pi\leqslant l$. Now, by the contradiction assumption, there exists minimizing boundaries $T_j'$ in $B_j(0)$ that does not cross $T_j$ smoothly, $x_j\in B_1(0)\cap\sing T_j$ with 
    $$\Theta_{T_j'}(x_j,1)-\Theta_{T_j'}(x_j,\gamma)<1/j;$$
    but 
    $$x_j\notin U_{\gamma}(\Pi).$$
    After passing to a further subsequence, we can assume $T_j'\rightharpoonup T'$ a minimizing boundary in $\R^{n+1}$ and $x_j\rightarrow x\in B_1(0)$. By Allard's regularity theorem \cite[Theorem 24.2]{Sim}, we have $x\in \sing T'$. Moreover, for some $r$ slightly smaller than 1 and $\gamma''$ slightly larger than $\gamma'$, we have $$\Bar{B}_{\gamma}(x_j)\subset\Bar{B}_{\gamma'}(x)\subset B_{\gamma''}(x)\subset B_r(x)\subset B_1(x_j).$$
    Therefore, 
    $$\Vert T'\Vert(B_r(x))\leqslant\liminf_{j\rightarrow\infty}\Vert T_j'\Vert(B_r(x))\leqslant\liminf_{j\rightarrow\infty}\Vert T_j'\Vert(B_1(x_j)),$$ 
    $$\Vert T'\Vert(B_{\gamma''}(x))\geqslant\Vert T'\Vert(\Bar{B}_{\gamma'}(x))\geqslant\limsup_{j\rightarrow\infty}(\Bar{B}_{\gamma'}(x))\geqslant\limsup_{j\rightarrow\infty}\Vert T'\Vert(B_{\gamma}(x_j)).$$
    Again by 
    $$0\leqslant\Theta_{T'}(x,r)-\Theta_{T'}(x,\gamma)\leqslant\liminf(\Theta_{T_j'}(x_j,1)-\Theta_{T_j'}(x_j,\gamma))\leqslant0$$
    and $x\in\sing T'$, we know that $T'$ is a non-flat minimizing cone $C'$ centered at $x$. Moreover, since $T_j$ and $T_j'$ converge locally smoothly on $\reg C,\reg C'$ respectively, we know that $C$ and $C'$ cannot cross smoothly because $T_j, T_j'$ were not. Hence by Lemma \ref{rigidity}, we have $C=C'$ and $x\in\textup{spine } C=\Pi$. This contradicts the fact that 
    $$x_j\notin U_{\gamma}(\Pi).$$
\end{proof}
What we really need is the following rescaled version. Suppose $T,T'$ are minimizing boundaries in $B_{\eta^{-1}\gamma^{j-1}}(x)$, by applying Lemma \ref{L:cluster1} to $\eta_{x,\gamma^{j-1}\#}T,\eta_{x,\gamma^{j-1}\#}T'$, we get
\begin{cor}\label{cluster2}
    Given $\epsilon,\gamma\in(0,1)$, there exists $\eta=\eta(n,l,\gamma,\epsilon)\in(0,1)$ with the following property.\par
    Suppose $T$ is a minimizing boundary in $B_{\eta^{-1}\gamma^{j-1}}(x)$, $x\in\mathcal{S}^l_{\epsilon,\gamma^j}(T)$ and $$\Theta_T(x,\gamma^{j-1})-\Theta(x,\gamma^j)<\eta.$$Then there exists a $\leqslant l$-dimensional subspace $\Pi$ of $\R^{n+1}$ such that:\par 
    If $T'$ is another minimizing boundary in $B_{\eta^{-1}\gamma^{j-1}}(x)$ that does not cross $T$ smoothly, for any $x'\in B_{\gamma^{j-1}}(x)\cap\sing T'$ satisfies $$\Theta_{T'}(x',\gamma^{j-1})-\Theta_{T'}(x',\gamma^j)<\eta,$$ 
    we have
    $$x'\in U_{\gamma^j}(\Pi+x).$$
\end{cor}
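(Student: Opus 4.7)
The plan is to reduce Corollary \ref{cluster2} directly to Lemma \ref{L:cluster1} via the rescaling map $\eta_{x,\gamma^{j-1}}\colon y \mapsto (y-x)/\gamma^{j-1}$, which carries $B_{\eta^{-1}\gamma^{j-1}}(x)$ onto $B_{\eta^{-1}}(0)$ and preserves the class of minimizing boundaries. Setting $\tilde T = \eta_{x,\gamma^{j-1}\#}T$ and, for the second half of the statement, $\tilde T' = \eta_{x,\gamma^{j-1}\#}T'$, the entire proof reduces to checking that the hypotheses and conclusion of Lemma \ref{L:cluster1} transform correctly under this rescaling, with the same $\eta = \eta(n,l,\gamma,\epsilon)$.

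For the hypotheses on $T$: scale invariance of density ratios gives $\Theta_{\tilde T}(0,s) = \Theta_T(x,\gamma^{j-1}s)$ for every $s>0$, so the density drop assumption $\Theta_T(x,\gamma^{j-1}) - \Theta_T(x,\gamma^j) < \eta$ becomes $\Theta_{\tilde T}(0,1) - \Theta_{\tilde T}(0,\gamma) < \eta$. The quantitative stratum definition is similarly scale invariant: if $x \in \mathcal{S}^l_{\epsilon,\gamma^j}(T)$, then the inequality $\mathcal{F}_{B_1(0)}(\eta_{x,s\#}T,\mathscr{C}^{l+1}) > \epsilon$ holds for all $s \in [\gamma^j,1]$, and the substitution $s = \gamma^{j-1}t$ with $t \in [\gamma,1]$ converts this into $\mathcal{F}_{B_1(0)}(\eta_{0,t\#}\tilde T,\mathscr{C}^{l+1}) > \epsilon$, i.e. $0 \in \mathcal{S}^l_{\epsilon,\gamma}(\tilde T)$. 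Applying Lemma \ref{L:cluster1} to $\tilde T$ then produces a linear subspace $\Pi \subset \R^{n+1}$ with $\dim \Pi \leqslant l$.

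For the conclusion involving $T'$: the smooth-crossing property is invariant under homothety, so $\tilde T'$ does not cross $\tilde T$ smoothly; the singular set transforms equivariantly, so $\tilde x' := (x'-x)/\gamma^{j-1} \in B_1(0) \cap \sing \tilde T'$; and the same density identity $\Theta_{\tilde T'}(\tilde x', s) = \Theta_{T'}(x', \gamma^{j-1}s)$ turns the hypothesized drop on $(T',x')$ between scales $\gamma^{j-1}$ and $\gamma^j$ into the corresponding hypothesis on $(\tilde T', \tilde x')$ between scales $1$ and $\gamma$. Lemma \ref{L:cluster1} then yields $\tilde x' \in U_\gamma(\Pi)$; since $\Pi$ is a linear subspace through the origin, applying $\eta_{x,\gamma^{j-1}}^{-1}$ rescales this neighborhood to $U_{\gamma^j}(\Pi + x)$, giving $x' \in U_{\gamma^j}(\Pi + x)$ as required. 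No substantive obstacle appears; the only care needed is the bookkeeping of how each condition -- density, stratum membership, smooth crossing, and singular set -- behaves under dilation and translation.
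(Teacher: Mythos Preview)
Your proposal is correct and follows exactly the paper's own approach: the paper states the corollary immediately after noting that it follows ``by applying Lemma \ref{L:cluster1} to $\eta_{x,\gamma^{j-1}\#}T,\eta_{x,\gamma^{j-1}\#}T'$,'' and you have simply spelled out the bookkeeping of that rescaling in full detail.
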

Now we can begin to construct explicit covering of a family of minimizing currents. The construction is taken from \cite{CN}.
\begin{lem}[{cf. \cite[Lemma 7.1]{CN}}]\label{decomposition}
    Let $\mathscr{F}$ be a family of minimizing currents in $B_R(0)\subset\R^{n+1}$ whose supports are pairwise disjoint. Assume there exist $r,\Lambda>0$ such that for each $T\in\mathscr{F}$ and $x\in\sing T$
    \begin{enumerate}[(a)]
        \item $T\llcorner B_r(x)$ is a minimizing boundary;
        \item $\Theta_T(x,1)<\Lambda$.
    \end{enumerate}
    Then given $l\in\lbrace0,1,\dots,n-7\rbrace,\epsilon,\gamma\in (0,1)$, there exist constants $c_1(n),c_2(n), M(n,R)$,\\$K(n,l,\gamma,\epsilon,\Lambda),Q(n,l,\gamma,\epsilon,r,\Lambda)$ with the following property.\par 
    For $j\in\N$ sufficiently large,
    \begin{enumerate}[(1)]
        \item $\sing\mathscr{F}$ can be decomposed into at most $j^K$ subsets $E_{A^j}$;
        \item Each $E_{A_j}\cap\mathcal{S}_{\epsilon,\gamma^j}^l(\mathscr{F})$ can be covered by $$M(c_1\gamma^{-n-1})^Q(c_2\gamma^{-l})^{j-Q}$$balls of radius $\gamma^j$.
    \end{enumerate}
\end{lem}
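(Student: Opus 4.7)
The plan is to follow the inductive covering argument of Cheeger--Naber \cite[Lemma 7.1]{CN}, using Corollary \ref{cluster2} as the key cone-splitting step that produces an $l$-dimensional tube trapping the singular set of the \emph{entire} family inside a ball. Let $\eta = \eta(n, l, \gamma, \epsilon) \in (0, 1)$ be the constant supplied by Corollary \ref{cluster2}. For each $x \in \sing \mathscr{F}$, let $T_x \in \mathscr{F}$ be the unique current with $x \in \sing T_x$ (uniqueness follows from the pairwise disjointness of supports). Call a scale $i \in \{1, \ldots, j\}$ \emph{density-drop bad at $x$} if $\Theta_{T_x}(x, \gamma^{i-1}) - \Theta_{T_x}(x, \gamma^i) \geq \eta$, and \emph{good at $x$} otherwise. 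Since the sum of these drops telescopes and is bounded by $\Theta_{T_x}(x, 1) < \Lambda$, each point has at most $\Lambda/\eta$ density-drop bad scales. Encode their positions as a label $A^j(x) \in \{0, 1\}^j$ and let $E_{A^j} \subset \sing \mathscr{F}$ be the corresponding preimage; the number of admissible labels is bounded by $\binom{j}{\leq \Lambda/\eta} \leq j^K$ for $K = K(n, l, \gamma, \epsilon, \Lambda)$, proving (1).

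For (2), I would construct the covering of $E_{A^j} \cap \mathcal{S}^l_{\epsilon, \gamma^j}(\mathscr{F})$ by induction on $i = 0, 1, \ldots, j$. Start from a covering of $B_R(0)$ by at most $M(n, R)$ balls of radius $1$. Given a covering at step $i - 1$ by balls $\{B_{\gamma^{i-1}}(x_\alpha)\}$ with centers in $E_{A^j} \cap \mathcal{S}^l_{\epsilon, \gamma^j}(\mathscr{F})$, refine each $B_{\gamma^{i-1}}(x_\alpha)$ into balls of radius $\gamma^i$ by one of two rules. If either $\eta^{-1}\gamma^{i-1} > r$ (``Corollary not yet applicable'') or $a_i = 1$ (density-drop bad scale), cover the ball by the trivial volume bound using at most $c_1(n)\gamma^{-n-1}$ balls. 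Otherwise apply Corollary \ref{cluster2} at $x_\alpha$ with respect to $T_{x_\alpha}$: monotonicity of the quantitative stratum gives $x_\alpha \in \mathcal{S}^l_{\epsilon, \gamma^i}(T_{x_\alpha})$, and the density-drop hypothesis at scale $i$ holds by construction. The Corollary produces a subspace $\Pi_\alpha$ of dimension at most $l$ such that for every other $T' \in \mathscr{F}$ (which cannot cross $T_{x_\alpha}$ smoothly since supports are disjoint), every $x' \in B_{\gamma^{i-1}}(x_\alpha) \cap \sing T'$ with good scale $i$ on $T'$ must lie in $U_{\gamma^i}(\Pi_\alpha + x_\alpha)$. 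Consequently every point of $E_{A^j}$ inside the ball sits in this tube, which is covered by at most $c_2(n)\gamma^{-l}$ balls of radius $\gamma^i$, whose centers I place inside $E_{A^j} \cap \mathcal{S}^l_{\epsilon, \gamma^j}(\mathscr{F})$ whenever the intersection is nonempty.

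The number of ``trivial'' steps performed over the whole induction is bounded by $Q = Q(n, l, \gamma, \epsilon, r, \Lambda) := Q_0(\eta, \gamma, r) + \lfloor \Lambda/\eta \rfloor$, where $Q_0$ counts the initial scales with $\eta^{-1}\gamma^{i-1} > r$. Multiplying the per-scale refinement factors yields a covering by at most $M(c_1 \gamma^{-n-1})^Q (c_2 \gamma^{-l})^{j - Q}$ balls of radius $\gamma^j$, establishing (2). The main obstacle is the correct application of Corollary \ref{cluster2} at good scales: the Corollary is stated relative to the single distinguished current $T_{x_\alpha}$, yet the subspace $\Pi_\alpha$ it produces must trap singular points of \emph{every} other current $T' \in \mathscr{F}$ in the ball. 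The disjoint-supports hypothesis is precisely what makes the ``no smooth crossing'' clause automatic across the family, enabling the family-wide dimension reduction. A secondary bookkeeping point is that the goodness of scale $i$ at a nearby point $y$ refers to its own current $T_y$, not to $T_{x_\alpha}$; this is exactly the data encoded in the shared label $A^j$, which is why all points of $E_{A^j}$ simultaneously satisfy the density-drop hypothesis at every good scale.
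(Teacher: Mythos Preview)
Your proposal is correct and follows essentially the same approach as the paper: the decomposition into labeled sets $E_{A^j}$ via the $0$--$1$ density-drop signature, the bound $\binom{j}{\leq \Lambda/\eta}\leq j^K$ on the number of labels, and the two-case inductive refinement (trivial cover at bad/early scales, Corollary~\ref{cluster2} at good scales) are exactly what the paper does, with $Q=J+K$ playing the same role as your $Q_0+\lfloor\Lambda/\eta\rfloor$. Your explicit remarks that disjoint supports force the ``no smooth crossing'' hypothesis across the family, and that the shared label $A^j$ synchronizes the good-scale condition for \emph{all} points of $E_{A^j}$, make precise two points the paper leaves implicit.
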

\noindent\textbf{The construction of decomposition.} Fix $\eta\in(0,1)$ according to Lemma \ref{L:cluster1}. For each $T\in\mathscr{F}$, $x\in\sing T$ and $j\in\N$, define
\begin{equation}
    a_j(x)=\begin{cases}
        1,&\Theta_T(x,\gamma^{j-1})-\Theta_T(x,\gamma^j)\geqslant\eta,\\
        0,&\Theta_T(x,\gamma^{j-1})-\Theta_T(x,\gamma^j)<\eta.
    \end{cases}\nonumber
\end{equation}
The infinite dimensional tuple associated to $x$ is defined as $$A(x)=(a_1(x),a_2(x),\dots)\in 2^{\N}.$$For each infinite dimensional $0-1$ tuple $A=(a_1,a_2,\dots)\in 2^{\N}$, let $A^j=(a_1,\dots,a_j)$ be its first $j$ entries. Now, define $$E_{A^0}=\sing \mathscr{F},E_{A^j}=\lbrace x\in\sing\mathscr{F}:A^j(x)=A^j\rbrace.$$Clearly, we have the decomposition of $\sing\mathscr{F}$:$$\sing\mathscr{F}=\bigcup_{\textup{All possible }A}E_{A^j}.$$
Finally, let us take a minimal covering of $E_{A^j}\cap\mathcal{S}^l_{\epsilon,\gamma^j}(\mathscr{F})$ by balls of radius $\gamma^j$. 

\begin{proof}[Proof of Lemma \ref{decomposition}]
    We henceforth estimate the amounts of sets involved in the above procedure.%\\
    
    \smallskip
    $(1)$ For each $T\in\mathscr{F}$ and $x\in \sing T$, using assumption (a) we have $$\sum_{j=1}^{\infty}(\Theta_T(x,\gamma^{j-1})-\Theta_T(x,\gamma^j))=\Theta_T(x,1)-\Theta_T(x)\leqslant\Lambda.$$Hence, the $j$'s such that $\Theta_T(x,\gamma^{j-1})-\Theta_T(x,\gamma^j)\geqslant\eta$ can only occur no more than $K=[\Lambda/\eta]$ times. This means that $\vert A^j\vert\leqslant K$ provided $E_{A^j}\ne\emptyset$. For $j>K$, the amount of all such possible $A^j$ is 
    $$\binom{j}{K}\leqslant j^K.$$

    \smallskip
    $(2)$ Fix $A=(a_1,a_2,\dots)$ such that $E_{A^j}$ is nonempty. Let $J$ be the smallest integer such that $\eta^{-1}\gamma^{J-1}<r$. We inductively define the required covering for $E_{A^j}\cap\mathcal{S}_{\epsilon,\gamma^j}^l(\mathscr{F})$. First by our assumption $E_{A^j}\cap\mathcal{S}_{\epsilon,\gamma^j}^l(\mathscr{F})\subset B_R(0)$, we can cover $E_{A^j}\cap\mathcal{S}_{\epsilon,\gamma^j}^l(\mathscr{F})$ by $c_1(n)R^{n+1}$ balls of radius 1 with centers in $E_{A^j}\cap\mathcal{S}_{\epsilon,\gamma^j}^l(\mathscr{F})$. Suppose for $k<j$, we have already defined a covering of $E_{A^j}\cap\mathcal{S}_{\epsilon,\gamma^j}^l(\mathscr{F})$ by balls of radius $\gamma^{k-1}$ with centers in $E_{A^j}\cap\mathcal{S}_{\epsilon,\gamma^j}^l(\mathscr{F})$. For each ball $B_{\gamma^{k-1}}(x)$ in the covering constructed in the last step, we have exactly two cases.
    \begin{itemize}
        \item [1.] If $k\leqslant J$ or $a_k=1$, we simply cover $B_{\gamma^{k-1}}(x)\cap E_{A^j}\cap\mathcal{S}^l_{\epsilon,\gamma^j}(\mathscr{F})$ by $c_1\gamma^{-n}$ balls of radius $\gamma^k$ with centers in $E_{A^j}\cap\mathcal{S}^l_{\epsilon,\gamma^j}$.
        \item [2.] Otherwise, elements in $\mathscr{F}$ are minimizing boundaries in $B_{\eta^{-1}\gamma^{k-1}}(x)$. Also, if $T'\in\mathscr{F},x'\in\mathcal{S}^l_{\epsilon,\gamma^j}(T')\cap E_{A^j}$, we know that $$\Theta_{T'}(x',\gamma^{k-1})-\Theta_{T'}(x',\gamma^k)<\eta.$$Applying Corollary \ref{cluster2}, we get $$B_{\gamma^{k-1}}(x)\cap E_{A^j}\cap\mathcal{S}^l_{\epsilon,\gamma^j}(\mathscr{F})\subset U_{\gamma^k}(\Pi+x)$$for some $\leqslant l$ dimensional subspace $\Pi\subset\R^{n+1}$. Hence $B_{\gamma^{k-1}}(x)\cap E_{A^j}\cap\mathcal{S}^l_{\epsilon,\gamma^j}(\mathscr{F})$ can be covered by $c_2(n)\gamma^{-l}$ balls of radius $\gamma^k$ with centers in $E_{A^j}\cap\mathcal{S}^l_{\epsilon,\gamma^j}(\mathscr{F})$.
    \end{itemize}
    Since there are no more than $K$ nonzero entries in $A^j$, we see that case 1 can occur at most $Q=J+K$ times. Hence, we have constructed a covering of $E_{A^j}\cap\mathcal{S}^l_{\epsilon,\gamma^j}(\mathscr{F})$ by $$M(c_1\gamma^{-n-1})^Q(c_2\gamma^{-l})^{j-Q}$$ balls of radius $\gamma^{j}$ with centers in $E_{A^j}\cap\mathcal{S}^l_{\epsilon,\gamma^j}(\mathscr{F})$.
\end{proof}

Now we are ready to prove Theorem \ref{2}.

\begin{proof}[Proof of Theorem \ref{2}]
    Choose $\gamma=c_2^{-2/\epsilon'}$ in Lemma \ref{decomposition}. Now for $j$ large, we have $$c_2^j=\gamma^{-\epsilon'j/2},j^K\leqslant\gamma^{-\epsilon'j/2}.$$By Lemma \ref{decomposition}, $U_{\gamma^j}(\mathcal{S}^l_{\epsilon,\gamma^j}(\mathcal{\mathscr{F}}))$ can be covered by $Mj^K(c_1\gamma^{-n-1})^Q(c_2\gamma^{-l})^{j-Q}$ balls of radius $2\gamma^j$, hence 
    \begin{equation}
        \begin{aligned}
            \Hau^{n+1}(U_{\gamma^j}(\mathcal{S}^l_{\epsilon,\gamma^j}))&\leqslant \omega_{n+1}Mj^K(c_1\gamma^{-n-1})^Q(c_2\gamma^{-l})^{j-Q}\gamma^{jn}\\&\leqslant C(n,K,Q,l,\gamma)\gamma^{j(n-l-\epsilon')}.
        \end{aligned}\nonumber
    \end{equation}
    For general $r\in(0,1)$, choose $j$ such that $\gamma^{j+1}\leqslant r<\gamma^j$, then$$U_r(\mathcal{S}^l_{\epsilon,r})\subset U_{\gamma^j}(\mathcal{S}^l_{\epsilon,\gamma^j})\implies \Hau^{n+1}(U_r(\mathcal{S}^l_{\epsilon,r}))\leqslant C\gamma^{j(n-l-\epsilon')}\leqslant Cr^{n-l-\epsilon'}.$$
\end{proof}

\subsection{Proof of Proposition \ref{3}}
\label{SS:proof of Prop3}
\begin{proof}
    Given $r>0$, we have $A\subset\cup_{x\in A}B_r(x)$. By Vitalli covering lemma \cite[Theorem 3.3]{Sim}, for some $A'\subset A$, $\lbrace B_r(x)\rbrace_{x\in A'}$ are pairwise disjoint and 
    $$A\subset\cup_{x\in A'}B_{5r}(x).$$
    Relabelling, we may write $A\subset \cup_{i\in I}B_r(x_i)$ with $x_i\in A$ and $\lbrace B_{r/5}(x_i)\rbrace_{i\in I}$ pairwise disjoint. By assumption, for $r$ small, we have 
    $$\sum_{i\in I}\omega_n(r/5)^n\leqslant\Hau^n(U_{r/5}(A))\leqslant C(r/5)^{n-l}.$$
    Hence the index set $I$ must be finite with 
    $$\# I\leqslant C(n,l) r^{-l}.$$
    Moreover, by the definition of modulus of continuity, given $\epsilon>0$, there exists $r(\epsilon)>0$ small such that $\omega(r)<\epsilon$ for $0<r<r(\epsilon)$. Hence if $r<r(\epsilon)$, setting $t_i=f(x_i)$, we have $$f(A)=\bigcup_{i\in I}f(A\cap B_r(x_i))\subset\bigcup_{i\in I}(t_i-\epsilon r^{\alpha},t_i+\epsilon r^{\alpha}).$$
    
    Let us show $(1)$. We have 
    $$U_{\epsilon r^{\alpha}}(f(A))\subset\bigcup_{i\in I}(t_i-2\epsilon r^{\alpha},t_i+2\epsilon r^{\alpha}).$$
    Hence 
    $$\Hau^1(U_{\epsilon r^{\alpha}}(f(A))\leqslant4\epsilon r^{\alpha}\# I\leqslant C\epsilon r^{\alpha-l}=C\epsilon^{l/\alpha}(\epsilon r^{\alpha})^{1-l/\alpha}.$$
    This implies $\mathcal{M}^{l/\alpha}(f(A))=0$.

    For $(2)$, for $t\in[-1,1]$, set 
    $$I_{\epsilon,r}(t)=\lbrace i:t\in(t_i-\epsilon r^{\alpha},t_i+\epsilon r^{\alpha})\rbrace,$$ 
    $$N_{\epsilon,r}(t)=\sum_{i\in I}\chi_{(t_i-\epsilon r^{\alpha},t_i+\epsilon r^{\alpha})}(t)=\# I_{\epsilon,r}(t).$$
    Note that $$A\cap f^{-1}(t)=\bigcup_{i\in I}A\cap f^{-1}(t)\cap B_r(x_i)\subset \bigcup_{i\in I_{\epsilon,r}(t)}B_r(x_i).$$
    Hence $$\Hau^n(U_r(A\cap f^{-1}(t)))\leqslant C(n)r^n\# I_{\epsilon,r}(t).$$
    Let us determine for which $t$ we have that $\# I_{\epsilon,r}(t)=N_{\epsilon,r}(t)$ is a high order term of $r^{\alpha-l}$. To that end, we integrate $N_{\epsilon,r}$ over $[-1,1]$ to yield 
    $$\int_{-1}^1N_{\epsilon,r}(t)\dif t\leqslant\epsilon r^{\alpha}\# I\leqslant C\epsilon r^{\alpha-l}.$$
    Next fix $\delta>0$ small. By setting $T_{\epsilon,r}^{\delta}:=\lbrace t: N_{\epsilon,r}(t)\geqslant C\sqrt{\epsilon}r^{\alpha-l-\delta}\rbrace$, we get $\Hau^1(T_{\epsilon,r})\leqslant\sqrt{\epsilon}r^{\delta}$. We consider $\epsilon=2^{-2j}$, $r_j=r(2^{-2j})/2$ and set $T_{j,k}^{\delta}=T^{\delta}_{2^{-2j},2^{1-k}r_j}$. Hence $$\Hau^1(T^{\delta}_{j,k})\leqslant2^{-j-\delta (k-1)}r_j^{\delta}\leqslant 2^{-j-\delta (k-1)}.$$
    As a result  
    \[
    \begin{aligned}
    T^{\delta}=\bigcap_{m=1}^{\infty}\bigcup_{j=m}^{\infty}\bigcup_{k=1}^{\infty}T^{\delta}_{j,k} & \implies \\
    &\Hau^1(T^{\delta})\leqslant\sum_{j=m}^{\infty}\sum_{k=1}^{\infty}\Hau^1(T^{\delta}_{j,k})\leqslant\sum_{j=m}^{\infty}2^{-j}\sum_{k=1}^{\infty}2^{-\delta (k-1)}\rightarrow0,\textup{ as }m\rightarrow\infty.
    \end{aligned}
    \]
    If $t\in[-1,1]$ but $t\notin T^{\delta}$, then $t\notin\cup_{k=1}^{\infty}T^{\delta}_{j,k}$ for all sufficiently large $j$. Tracing back, we have 
    $$\Hau^n(U_{2^{1-k}r_j}(A\cap f^{-1}(t)))\leqslant C2^{-j}(2^{1-k}r_j)^{n+\alpha-l-\delta} \textup{ for }j\textup{ sufficiently large}.$$
    For $r>0$ sufficiently small, suppose $r<r_j$ for some $j$ large. There exists a $k\in\N$ with $2^{-k}r_j\leqslant r<2^{1-k}r_j$. Hence 
    \[
    \begin{aligned}
       \Hau^n(U_r(A\cap f^{-1}(t))) & \leqslant \Hau^n(U_{2^{1-k}r_j}(A\cap f^{-1}(t))) \\ 
       & \leqslant C2^{-j}(2^{1-k}r_j)^{n+\alpha-l-\delta}\leqslant C2^{n+\alpha-l-\delta}2^{-j}r^{n+\alpha-l-\delta}. 
    \end{aligned}
    \]
    This proves that $\mathcal{M}^{l-\alpha+\delta}(A\cap f^{-1}(t))=0$ for $t\in [-1,1]\setminus T^{\delta}$. Finally, $T=\cup_{j=1}^{\infty}T^{1/j}$ is still a $\Hau^1$ zero measure set. We have 
    \[\mathcal{M}^{l-\alpha+\delta}(A\cap f^{-1}(t))=0\textup{ for all }\delta>0\textup{ provided }t\in [-1,1]\setminus T.\]
\end{proof}
\begin{rmk}
    When $l\geqslant\alpha$, with a slight modification, we can prove
    \begin{itemize}
        \item $\mathcal{M}^{l-\alpha}_*(A\cap f^{-1}(t))=0$ for $\Hau^1$ a.e $t\in[-1,1]$;
        \item If for some $C>0$, $\omega$ satisfies 
        $$\omega(r)\leqslant\frac{C}{(\log r)^2}\textup{ for }r\textup{ small},$$
        then $\mathcal{M}^{l-\alpha}(A\cap f^{-1}(t))=0$ for $\Hau^1$ a.e $t\in[-1,1]$.
    \end{itemize}
\end{rmk}

\subsection{Proof of Theorem \ref{1}}
\label{SS: proof of Thm1}
Now we can begin to prove our main theorem. The proof goes almost identical to the proof in \cite{CMSb}.
\begin{proof}
    By \cite[Section 6]{CMSb}, we can first perturb $\Gamma$ to a nearby $\Gamma'$ such that all elements in $\mathscr{M}(\Gamma')$ is of multiplicity one. Moreover, it is possible to construct a family of perturbation $(\Gamma'_s)_{s\in[-\delta,\delta]}$ with $\Gamma'_0=\Gamma'$ and $(\Gamma_s')_{s\in[-\delta,\delta]}$ satisfies the assumption of Theorem \ref{holder continuity}. Hence we can apply Theorem \ref{2} and Proposition \ref{3} to $f=\mathfrak{t}$ on $\mathcal{S}_{1/i}^l(\mathscr{F})$ where $\mathscr{F}=\cup_{s\in[-\delta,\delta]}\mathscr{M}(\Gamma_s')$ to derive Theorem \ref{1} with $\epsilon=1/i$. Note that:
    \begin{itemize}
        \item Due to \cite{HSa}, if a multiplicity one minimizing currents $T$ has a smooth boundary $[\![\Sigma]\!]$, we have that $T$ is a smooth hypersurface in a small neighborhood of $[\![\Sigma]\!]$. Using the compactness of $\mathscr{F}$, we know that Assumption (a) of Theorem \ref{2} is satisfied.
        \item Again using the compactness of $\mathscr{F}$, let $$\Lambda=\sup_{T\in\mathscr{F},x\in\sing T}\Theta_T(x,1)$$Suppose $T_j\in\mathscr{F}$ and $x_j\in\sing T_j$ such that $\Theta_{T_j}(x_j,1)\rightarrow\Lambda$. After passing to a subsequence, we may assume $T_j\rightharpoonup
        T\in\mathscr{F}$ and $x_j\rightarrow x\in\sing T$. For some $R$ slightly larger than 1, we have $$\infty>\omega_n^{-1}\Vert T\Vert(B_R(x))\geqslant\omega_n^{-1}\limsup_{j\rightarrow\infty}\Vert T\Vert(B_1(x_j))=\Lambda$$Hence the Assumption (b) of Theorem \ref{2} is satisfied
    \end{itemize}
    As a result, we can apply Theorem \ref{2} to $\mathscr{F}$ to derive that $$\mathcal{M}^{l+1/j}(\mathcal{S}_{1/j}^l(\mathscr{F}))=0,\textup{ for all }j\in\N,l\in\lbrace0,\dots,n-7\rbrace$$By applying Proposition \ref{3} and Theorem \ref{holder continuity} to $\mathfrak{t}\big|_{\mathcal{S}_{1/j}^l(\mathscr{F})}$ and $\alpha=\kappa_n+1-1/j$, we get a $\Hau^1$ measure zero set $A_j\subset[-\delta,\delta]$ such that for all $s\in[-\delta,\delta]\setminus A_j$, we have $$\mathcal{S}^0_{1/j}(T)=\mathcal{S}^1_{1/j}(T)=\mathcal{S}^2_{1/j}(T)=\emptyset,\mathcal{M}^{l+2/j+\epsilon-2-\epsilon_n}(\mathcal{S}^l_{1/j}(T))=0,\textup{ for all }T\in\mathscr{M}(\Gamma_s),\epsilon>0$$Now $A=\cup_{j=1}^{\infty}A_j$ is still of $\Hau^1$ measure zero, and for all $s\in[-\delta,\delta]\setminus A$, we have $$\mathcal{S}^0(T)=\mathcal{S}^1(T)=\mathcal{S}^2(T)=\emptyset,\mathcal{M}^{l+\epsilon'-2-\epsilon_n}(S^l_{\epsilon}(T))=0,\textup{ for all }T\in\mathscr{M}(\Gamma_s),\epsilon,\epsilon'>0$$
    Hence we have got the desired perturbation.
\end{proof}

\appendix
\section{Appendix}
Here we collect some previous results used in this paper. The first one is an infinitesimal density drop theorem.
\begin{prop}[{\cite[Proposition 3.3]{CMSa}}]\label{density drop}
    Suppose that $C\in\mathbb{I}_n^1(\R^{n+1})$ is a non-flat minimizing cone. If $T$ is a minimizing boundary in $\R^{n+1}$ that does not cross $C$ smoothly, then $$\Theta_T(x)\leqslant\Theta_C(0),\textup{ for all }x\in\spt T.$$The equality holds if and only if $T=\pm C$ and $x\in\textup{spine }C$.
\end{prop}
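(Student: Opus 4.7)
The approach would leverage the Hardt-Simon smooth foliation of the complement of $C$ together with a sliding / strong maximum principle argument, and finally a blowup rigidity in the equality case. By the Hardt-Simon theorem, since $C$ is a non-flat minimizing cone, $\R^{n+1}\setminus\spt C$ splits into two open components $\Omega^\pm$, each foliated by a smooth one-parameter family $\{S_t^\pm=tS_1^\pm\}_{t>0}$ of area-minimizing hypersurfaces asymptotic to $C$ at infinity and collapsing onto $\spt C$ as $t\to 0^+$. Using these sheets as barriers, I would first show that either $T=\pm C$ or $\spt T$ is contained entirely in one closed half $\overline{\Omega^\pm}$: if $\spt T$ met both sides, by connectedness there would be a contact point $y\in\spt T\cap\spt C$; at a contact in $\reg T\cap\reg C$ the non-crossing hypothesis combined with the strong maximum principle forces local coincidence and then $T=\pm C$ globally via unique continuation, while singular contacts are handled by sliding the smooth sheets $S_t^\pm$ until they first touch $T$, applying the smooth maximum principle there, and letting $t\to 0^+$ using upper semicontinuity of density.

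Assuming $T\neq\pm C$ and, without loss of generality, $\spt T\subset\overline{\Omega^+}$, for $x\in\spt T$ and large $R$ I would construct a competitor inside $B_R(x)$ by excising $T\llcorner B_R(x)$ and replacing it with a surface built from a sheet $S_t^+$ whose trace on $\partial B_R(x)$ matches that of $T$, using an isoperimetric cap to absorb small discrepancies. Minimality of $T$ then yields
\[\Vert T\Vert(B_R(x))\;\leq\;\Vert S_t^+\Vert(B_R(x))+o(R^n),\]
so monotonicity of $\Theta_T(x,\cdot)$ combined with the asymptotic $\Theta_{S_t^+}(0,R)\to\Theta_C(0)$ as $R\to\infty$ gives the desired bound $\Theta_T(x)\leq\Theta_C(0)$.

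For the equality case $\Theta_T(x_0)=\Theta_C(0)$, extract a tangent cone $C_{x_0}=\lim_i\eta_{x_0,r_i\#}T$, a minimizing cone centered at $0$ with density $\Theta_C(0)$. When $x_0\in\spt C$, the non-crossing of $T$ with $C$ passes, in the blowup limit, to a non-crossing of $C_{x_0}$ with a tangent cone of $C$ at $x_0$; when $x_0\notin\spt C$, Step 2 in fact produces strict inequality (the sheets $S_t^+$ provide a definite gap), so this case does not occur. A rigidity step for equal-density minimizing cones that fail to cross smoothly then forces $C_{x_0}=\pm C$ and pins $x_0\in\textup{spine }C$, and unique continuation propagates from the blowup to give $T=\pm C$ globally. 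The main obstacle is the singular-contact instance of the maximum principle in the localization step, which is not covered by the classical smooth statement: the cleanest workaround is the sheet-sliding argument, which exploits the smoothness of the Hardt-Simon sheets to convert a delicate singular question about the pair $(T,C)$ into a sequence of smooth questions about $(T,S_t^\pm)$, then passes to the limit $t\to 0^+$ recovering the conclusion.
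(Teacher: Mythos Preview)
The paper does not contain a proof of this proposition: it appears in the Appendix under ``Here we collect some previous results used in this paper'' and is quoted verbatim from \cite[Proposition~3.3]{CMSa} without argument. There is therefore no proof in the present paper to compare your attempt against.

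For what it is worth, your outline is the right circle of ideas and is essentially the strategy used in \cite{CMSa}: the Hardt--Simon foliation of $\R^{n+1}\setminus\spt C$ by smooth minimizing leaves, a one-sidedness/maximum-principle step (with the Ilmanen--Solomon--White style sliding to handle contact at singular points), a mass comparison against the leaves to get $\Theta_T(x)\leq\Theta_C(0)$, and a blowup rigidity for the equality case. A couple of places in your sketch would need tightening in a full write-up: the competitor construction on $\partial B_R(x)$ requires some care (one typically compares with the cone $C$ itself rather than a leaf, or uses the calibration/monotonicity at infinity for the leaves), and the equality-case step ``rigidity for equal-density minimizing cones that fail to cross smoothly then forces $C_{x_0}=\pm C$'' is precisely the content of the proposition you are proving, so you would want to avoid circularity there by arguing directly with the foliation (equality in monotonicity forces $T$ to be a cone, and then the one-sidedness plus maximum principle on the leaves pins it to $\pm C$). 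But these are refinements, not a different approach.
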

The second one describes how to compare the distance of singular sets with the distance of boundaries of currents. Given a smooth, closed, oriented, $(n-1)$-dimensional $\Gamma\subset\R^{n+1}$, consider the set of possible compact minimizers with boundary $\Gamma$:$$\mathscr{M}(\Gamma)=\lbrace \textup{minimizing integral }n\textup{-currents }T\textup{ in }\R^{n+1}:\p T=[\![\Gamma]\!]\textup{ and }\spt T\textup{ compact}\rbrace.$$
\begin{thm}[{\cite[Theorem 1.7]{CMSb}}]\label{holder continuity}
    Let $\Gamma$ be a smooth closed, oriented, $(n-1)$-dimensional submanifold in $\R^{n+1}$ and $(\Gamma_s)_{s\in[-\delta,\delta]}$ be a smooth deformation of $\Gamma_0=\Gamma$. Consider the family $$\mathscr{F}=\bigcup_{s\in[-\delta,\delta]}\mathscr{M}(\Gamma_s)$$ and assume the following:
    \begin{enumerate}[(a)]
        \item All elements of $\mathscr{F}$ with distinct boundaries have pairwise disjoint supports;
        \item All elements of $\mathscr{F}$ have multiplicity one up to their boundary;
        \item There exists a hypersurface $\Sigma$ with nonempty boundary and $h:\mathscr{F}\rightarrow C^{\infty}(\Sigma)$ so that for all $s\in[-\delta,\delta], T_s\in\mathscr{M}(\Gamma_s)$, we have$$\textup{graph}_{\Sigma}h(T_s)\subset\spt T_s,\p(\textup{graph}_{\Sigma}h(T_s))=\Gamma_s.$$
        \item There exists $\alpha>0$ such that for all $s_j\in[-\delta,\delta],T_{s_j}\in\mathscr{M}(\Gamma_{s_j}),j=1,2$,$$s_1<s_2\implies h(T_{s_2})-h(T_{s_1})\geqslant\alpha(s_2-s_1)\textup{ on }\Gamma.$$
    \end{enumerate}
    Then the timestamp function $$\mathfrak{t}:\spt\mathscr{F}\rightarrow[-\delta,\delta],$$ $$\mathfrak{t}(x)=s\textup{ for all }x\in\spt T_s,T_s\in\mathscr{M}(\Gamma_s),s\in[-\delta,\delta]$$ is $\alpha$-Holder on $\sing\mathscr{F}$ for every $\alpha\in(0,\kappa_n+1)$.
\end{thm}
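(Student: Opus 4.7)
The goal is the super-Holder bound $|\mathfrak{t}(x_1)-\mathfrak{t}(x_2)|\leqslant C|x_1-x_2|^{\alpha}$ on $\sing\mathscr{F}$ for any $\alpha<\kappa_n+1$. By compactness of $\spt\mathscr{F}$ and a finite covering argument, it suffices to fix a pair $x_j\in\sing T_{s_j}$ with $s_1<s_2$ close together; writing $r=|x_1-x_2|$, the task reduces to showing $s_2-s_1\leqslant C_{\alpha}r^{\alpha}$ for each fixed $\alpha<\kappa_n+1$.

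I would first perform a cone-rigidity reduction at scale $r$. Blowing up by $r^{-1}$ at $x_1$, after extracting a subsequential limit, the non-crossing rescaled currents $\eta_{x_1,r\#}T_{s_j}$ approach minimizing cones $C_1, C_2$; by Lemma~\ref{rigidity} these must agree as $C_1=C_2=:C$, with the separation vector $x_2-x_1$ forced to lie in $\textup{spine}(C)$. Hence on scale $r$ both currents are well-approximated by translates of the same cone $C$.

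The analytic core is next. Away from the singular sets, I would write $T_{s_2}$ as the normal graph over $T_{s_1}$ of a positive height function $u$ solving the minimal surface equation; its linearization is the Jacobi equation $L_{T_{s_1}}u=0$. Rescaling near $x_1$, $u$ is approximated by a nonnegative Jacobi field $v$ on $\textup{reg}\,C$. Separation of variables on the link decomposes $v$ into homogeneous modes $r^{\lambda}\phi_{\lambda}(\theta)$; Z.~Wang's theorem then ensures that the positive indicial roots of $L_C$, excluding those from ambient translations of $C$, satisfy $\lambda\geqslant\kappa_n+1$. Combined with the normalization $\|u\|\sim s_2-s_1$ at unit scale, propagated inward from the barrier condition (d) via elliptic regularity, this yields
\[u(y)\lesssim (s_2-s_1)|y-x_1|^{\kappa_n+1-\epsilon}\textup{ for each }\epsilon>0,\]
modulo contributions of size at most $|x_2-x_1|$ coming from the translational Jacobi sector.

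Finally I would close the argument by balancing the translational and non-translational contributions at scale $r$. The requirement that $x_2\in\sing T_{s_2}$ carries the same tangent cone $C$ rigidly links the translational amplitude to $|x_2-x_1|$, while the barrier (d) pins the non-translational amplitude at $\sim s_2-s_1$. Matching these at scale $r$ produces the inequality $s_2-s_1\leqslant C_{\epsilon}r^{\kappa_n+1-\epsilon}$, which is the desired Holder estimate. The main obstacle is this final matching: one must carefully separate the two Jacobi sectors and show that the non-translational amplitude cannot be absorbed into the translational one, which is accomplished by an iterative $L^{2}$-to-$L^{\infty}$ improvement scheme on dyadic annuli in the style of Simon--Wang. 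An additional technical step is to extend the graph representation across the low-dimensional singular set of $T_{s_1}$ via a Hardt--Simon type removable-singularity argument, so that the Jacobi expansion near $x_1$ is valid in a full punctured neighborhood rather than only on the regular stratum.
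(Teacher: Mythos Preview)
This theorem is not proved in the present paper. It appears in the Appendix under the heading ``Here we collect some previous results used in this paper'' and is simply quoted, with citation, as \cite[Theorem 1.7]{CMSb}. There is therefore no proof in this paper for your proposal to be compared against.

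That said, your outline does track the strategy of \cite{CMSb}: one linearizes the separation between two ordered minimizers to obtain a positive Jacobi field on the regular part of the lower sheet, uses the growth/decay dichotomy for positive Jacobi fields on minimizing hypercones (the indicial-root computation of Wang giving the exponent $\kappa_n+1$), and combines this with the boundary monotonicity assumption (d) to convert decay of the height function into the H\"older bound on $\mathfrak{t}$. Where your sketch is weakest is the ``matching at scale $r$'' paragraph: the actual argument in \cite{CMSb} does not proceed by an ad hoc balancing of translational versus non-translational Jacobi modes at a single scale, but rather by a uniform cone-scale growth estimate for positive super-solutions (their key analytic lemma), iterated across scales and transferred from the cone to the current by compactness. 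Your proposed ``iterative $L^2$-to-$L^\infty$ improvement on dyadic annuli'' and the Hardt--Simon removable-singularity step are gestures in the right direction, but as written they do not constitute a proof: you have not explained why the translational sector, which has growth rate $1<\kappa_n+1$, does not already saturate the estimate and force only $|s_2-s_1|\lesssim r$, nor how the boundary separation hypothesis (d) is propagated from $\Gamma$ all the way in to the singular point $x_1$.
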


%%%%%%%%%%%%%%%%%%%%%%%%%%%%%%%%%%%%%%

\end{document}